\documentclass[11pt]{article}
\usepackage[ansinew]{inputenc}
\usepackage{array}
\usepackage{color}
\usepackage{amsmath,amsthm,amscd}
\usepackage{amsxtra}
\usepackage{amstext}
\usepackage{amssymb}
\usepackage{hyperref}
\usepackage{latexsym}
\usepackage{amsfonts,cite}
\usepackage{graphics,epstopdf}
\usepackage{epsfig}
\usepackage{amsfonts}
\usepackage{graphicx}
\usepackage{subcaption}
\usepackage{tcolorbox}
\usepackage{array,tabularx,colortbl}
\usepackage[framemethod=TikZ]{mdframed}

\providecommand{\U}[1]{\protect\rule{.1in}{.1in}}

\begin{document}
	
	\sloppy
	\newtheorem{thm}{Theorem}
	\newtheorem{cor}{Corollary}
	\newtheorem{lem}{Lemma}
	\newtheorem{prop}{Proposition}
	\newtheorem{eg}{Example}
	\newtheorem{defn}{Definition}
	\newtheorem{rem}{Remark}
	\newtheorem{note}{Note}
	\numberwithin{equation}{section}
	
	\thispagestyle{empty}
	\parindent=0mm
	
	\begin{center}
		{\large \textbf{Advancing the Theory of Sheffer Sequences via Dual Pad\'e Rational Approximants}}\\ 					
		
		\vspace{0.20cm}
		
		{\bf \bf Ujair Ahmad$^{*1}$, Subuhi Khan$^{2}$}\\
		\vspace{0.15cm}
		
		{$^{1,2}$Department of Mathematics, Aligarh Muslim University, Aligarh- 202001, India.}
		\footnote{$^{*1}$ujairamu1998@gmail.com (Ujair Ahmad)(Corresponding author)}
		\footnote{$^{2}$subuhi2006@gmail.com (Subuhi Khan)}
	\end{center}
	\begin{abstract}
		\noindent
		Sheffer sequences offer an operational framework that unifies a wide range of classical polynomial families. In this article, a rational approximation is developed for both the amplitude and the delta-operator symbol in the exponential generating function. The resulting Pad\'e-Sheffer polynomials are shown, within the umbral calculus framework, to retain the Sheffer structure. Explicit operational representations are obtained, together with finite-order differential recurrences and closed-form connection constants characterizing the deviation from the underlying polynomial family. An important consequence of the construction is that rational generating indicators reduce the usual transcendental Lagrange inversion problem to a finite multinomial computation. The framework also suggests an interpretation in terms of weighted finite automata, linking polynomial constructions with state-space dynamics for unbounded combinatorial partitions.
	\end{abstract}
	\parindent=0mm
	\vspace{.25cm}
	
	\noindent
	\textbf{Key Words:}~~Umbral Calculus; Sheffer Sequences; Pad\'e Approximation; Rational Generating Functions; Shift-Invariant Operators; Combinatorial Automata; Lagrange Inversion.
	
	\vspace{0.25cm}
	\noindent
	\textbf{2020 Mathematics Subject Classification:}~~05A15; 05A40; 41A20; 41A21; 41A20; 11B83.
	
	\section{Introduction}
	
	Polynomial sequences play a central role in many branches of mathematics, notably special functions, combinatorics, and probability. In these contexts, generating functions provide a unified framework for the analysis of their algebraic and analytic properties. Among the most important families are the Sheffer sequences \cite{Sheffer1939}, whose modern theory was systematically developed by Rota and his collaborators within the framework of umbral calculus \cite{Rota1973, Roman1984}. Sheffer sequences generalize the classical Appell polynomials and include many well-known polynomial families, such as the Hermite, Laguerre, Bernoulli, Poisson--Charlier, and Meixner polynomials \cite{And, Roman1984}.\\
	
	\noindent	A polynomial sequence $\{s_n(x)\}_{n\ge 0}$ is called a Sheffer sequence if its exponential generating function has the form
	\begin{equation}\label{pseq1}
		\sum_{n=0}^{\infty} s_n(x)\,\frac{t^n}{n!}
		=
		A(t)\,\exp\bigl(xB(t)\bigr),
	\end{equation}
	where $A(t),B(t)\in F=\mathbb{K}[[t]]$, with $F$ denoting the algebra of formal power series over the field $\mathbb{K}$ \cite{Niven}. $A(t)$ is an invertible formal power series with $A(0)\neq 0$, whereas $B(t)$ is a delta series characterized by $B(0)=0$ and $B'(0)\neq 0$. The series $A(t)$ is referred to as the amplitude, while $B(t)$ is known as the indicator (or symbol) of the associated delta operator.\\
	
	\noindent	Here, $\mathbb{K}$ denotes a field of characteristic zero and $\mathbb{P}=\mathbb{K}[x]$ denotes the algebra of polynomials in one variable. The umbral algebra is realized on the dual space $\mathbb{P}^{*}$, whose elements are linear functionals acting on $\mathbb{P}$. For a functional $L\in\mathbb{P}^{*}$ and a polynomial $p(x)\in\mathbb{P}$, this action is represented by the bilinear pairing $\langle L | p(x)\rangle$. By the fundamental correspondence established by Roman and Rota, every linear functional is uniquely represented by an indicator series in $\mathbb{K}[[t]]$ \cite{Roman1984,Rota1973}. Within this framework, a Sheffer sequence is uniquely determined by a delta operator $Q$ satisfying the lowering relation
	\begin{equation*}
		Q s_n(x) = n\,s_{n-1}(x), \qquad n\ge 1,
	\end{equation*}
	together with an invertible shift-invariant operator that determines its generating function and initial conditions \cite{Roman1984,Dattoli2009}. These operator-theoretic formulation provides the foundation for the Pad\'e approximation framework developed in this article.\\
	
	\noindent	Pad\'e approximation has emerged as a fundamental technique in approximation theory and numerical computation because of its ability to represent analytic functions by rational expressions while preserving their local analytic behaviour \cite{Baker1996}. Let $f(x)$ be analytic in a neighbourhood of the origin. The Pad\'e approximant of type $[m| n]$ is the rational function
	\begin{equation}\label{pseq2}
		{}_{[m| n]}f(x)
		=
		\frac{P_m(x)}{Q_n(x)}
		=
		\frac{\displaystyle\sum_{r=0}^{m} a_r x^{r}}
		{\displaystyle\sum_{r=0}^{n} b_r x^{r}},
	\end{equation}
	where $b_0=1$ and
	\begin{equation}\label{pseq3}
		Q_n(x)\,f(x) - P_m(x) = O\bigl(x^{m+n+1}\bigr).
	\end{equation}
	Equivalently, if $f(x)=\sum_{r=0}^{\infty} c_r x^{r}$, then comparison of coefficients in \eqref{pseq3} leads to the relations
	\begin{equation}\label{pseq5}
		\sum_{j=0}^{k} c_j\,b_{k-j} = a_k,
		\qquad
		k=0,1,\ldots,m+n,
	\end{equation}
	from which the coefficients of the Pad\'e approximant are uniquely determined.\\
	
	\noindent	Despite their classical construction via truncated power series, Pad\'e approximants often provide a more faithful representation of the local analytic structure of a function, particularly in the presence of nearby singularities or pole-type behaviour \cite{Luke1975}. This enhanced approximation capability has motivated their use in a broad range of problems, including the stabilization of time-delay models \cite{PadeStability}, the numerical treatment of parabolic partial differential equations \cite{NassifPade,PadeBackParabolic}, and the approximation of special functions. More recently, Dattoli \textit{et al.} \cite{Dattoli2025} employed Pad\'e techniques to derive rational approximations of Appell polynomial families. Since Appell sequences correspond to the special case of Sheffer sequences generated by the delta series $B(t)=t$, the underlying delta operator remains unchanged and the approximation process affects only the associated amplitude function.\\
	
	\noindent	The extension of this approach from the Appell class to the general Sheffer family is considerably more delicate. Unlike the Appell case, a general Sheffer sequence is determined simultaneously by the invertible series \(A(t)\) and the delta series \(B(t)\). So, approximating only the amplitude cannot preserve the complete generating structure, whereas modifying the delta series changes the basic sequence, the delta operator, and the corresponding operational identities. This intrinsic coupling between the two generating components constitutes the principal obstacle in developing a Pad\'e approximation theory for general Sheffer sequences.\\
	
	\noindent The remainder of the paper is organized as follows. Section~2 introduces the Pad\'e--Sheffer generating function and shows that the resulting polynomial family preserves the Sheffer structure. It also provides explicit connection formulas and approximation estimates. Section~3 develops the associated operational theory, including lowering operators and finite recurrence relations, and demonstrates how the rationalized generating structure facilitates a Lagrange inversion formula and admits a combinatorial automaton interpretation. In Section~4, the general theory is illustrated through the Poisson--Charlier and Meixner polynomial families. Finally, Section~5 summarizes the main results and discusses several directions for future research.
	
	\section{Pad\'e--Sheffer Sequences: Construction and Connection Constants}
	
	In this section we introduce the Pad\'e--Sheffer polynomial family by applying Pad\'e approximation simultaneously to the two generating series that define a Sheffer sequence.\\
	Let
	\begin{equation}\label{pseq4}
		A(t) = \sum_{k=0}^\infty a_k \frac{t^k}{k!},
		\qquad
		B(t) = \sum_{k=1}^\infty b_k \frac{t^k}{k!},
	\end{equation}
	where $a_0\neq 0$ and $b_1\neq 0$.\\
	Now, for fixed nonnegative integers $m_A,n_A,m_B,n_B$, let
	\begin{equation*}
		R_A(t) := {_{[m_A| n_A]}}A(t),
		\qquad
		R_B(t) := {_{[m_B| n_B]}}B(t)
	\end{equation*}
	denote the corresponding Pad\'e approximants of $A(t)$ and $B(t)$, respectively, in the sense of \eqref{pseq2}--\eqref{pseq5}. These approximants is written explicitly as
	\begin{equation}\label{pseq7}
		R_A(t) = \frac{P^{(A)}_{m_A}(t)}{Q^{(A)}_{n_A}(t)},
		\qquad
		R_B(t) = \frac{P^{(B)}_{m_B}(t)}{Q^{(B)}_{n_B}(t)},
	\end{equation}
	where the normalization $Q^{(A)}_{n_A}(0)=Q^{(B)}_{n_B}(0)=1$ is imposed for convenience. By construction, the Maclaurin expansions of $R_A(t)$ and $R_B(t)$ agree with those of $A(t)$ and $B(t)$ through orders $M_{A}=m_A+n_A$ and $M_{B}=m_B+n_B$, respectively.

	\begin{defn}
		The sequence $\bigl\{s^{[m_A| n_A; m_B| n_B]}_n(x)\bigr\}_{n\ge 0}$, whose exponential generating function is
		\begin{equation}\label{pseq6}
			\sum_{n=0}^\infty s^{[m_A| n_A; m_B| n_B]}_n(x)\,\frac{t^n}{n!}
			=
			R_A(t)\,\exp\bigl(x R_B(t)\bigr),
		\end{equation}
		is called the $[m_A| n_A; m_B| n_B]$ Pad\'e--polynomial sequence associated with the pair $(R_A(t),R_B(t))$.
	\end{defn}
	
	Whenever the approximation orders are clear from the context, the notation is simplified and the polynomials are written as
	\begin{equation*}
		s^{[P]}_n(x) := s^{[m_A| n_A; m_B| n_B]}_n(x).
	\end{equation*}
	
	The above definition is well posed in $\mathbb{K}[x][[t]]$. Since both Pad\'e approximants admit formal power series expansions about the origin, the exponential series $\exp\bigl(x R_B(t)\bigr)$ belongs to $\mathbb{K}[x][[t]]$. Consequently, the product $R_A(t)\exp\bigl(x R_B(t)\bigr)$ lies in $\mathbb{K}[x][[t]]$, and each coefficient $s^{[P]}_n(x)$ is a polynomial in $x$. Therefore, the rational generating function \eqref{pseq6} uniquely determines the Pad\'e--polynomial sequence.
	
	First, we establish the existence of functionals associated with the rational generating function \eqref{pseq6}.
	\begin{lem}
		Let $R_A(t)$ and $R_B(t)$ be the Pad\'e approximants of $A(t)$ and $B(t)$, respectively. Then there exist unique functionals $L_{[P]},N_{[P]}\in \mathbb{P}^{*}$ such that $L_{[P]}$ is a delta functional and $N_{[P]}$ is invertible.
	\end{lem}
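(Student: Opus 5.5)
The plan is to use the Roman--Rota isomorphism between $\mathbb{P}^{*}$ and $F=\mathbb{K}[[t]]$. Each formal power series $f(t)=\sum_{k\ge0} c_k t^k/k!$ corresponds to exactly one functional $L_f$, defined by $\langle L_f\,|\,x^k\rangle=c_k$. Under this correspondence, delta functionals match delta series (order exactly one) and invertible functionals match series with nonzero constant term. So it suffices to show two things: that $R_A(t)$ and $R_B(t)$ are genuine elements of $F$, and that $R_B$ is a delta series while $R_A$ is invertible. We then take $N_{[P]}$ to correspond to $R_A$ and $L_{[P]}$ to correspond to $R_B$. Uniqueness is automatic, because the correspondence is bijective.

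\textbf{Step 1: membership in $F$.} The normalization in \eqref{pseq7} gives $Q^{(A)}_{n_A}(0)=Q^{(B)}_{n_B}(0)=1$. Hence each denominator is a unit in $\mathbb{K}[[t]]$, with inverse given by the geometric series $\sum_{j\ge0}(1-Q(t))^j$, which converges formally. Therefore $R_A=P^{(A)}_{m_A}\,(Q^{(A)}_{n_A})^{-1}$ and $R_B=P^{(B)}_{m_B}\,(Q^{(B)}_{n_B})^{-1}$ lie in $F$. This matches the well-posedness remark made after the definition.

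\textbf{Step 2: low-order coefficients.} By \eqref{pseq3}, the expansions of $R_A$ and $R_B$ agree with those of $A$ and $B$ through orders $M_A$ and $M_B$. Since $M_A\ge0$, the constant terms match, so $R_A(0)=a_0\neq0$ and $R_A$ is invertible. For $R_B$ we need agreement through order $1$, which requires $M_B=m_B+n_B\ge1$. Then $R_B(0)=B(0)=0$ and $R_B'(0)=b_1\neq0$, so $R_B$ is a delta series. The same conclusion also follows directly from \eqref{pseq5}. With $k=0$ we get $a^{(B)}_0=c_0b_0=0$. With $k=1$ we get $a^{(B)}_1=c_1\neq0$, where $c_1=b_1$ is the linear coefficient of $B$.

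\textbf{Step 3: the degenerate case, which is the main obstacle.} Everything hinges on the approximants being well defined and on $m_B\ge1$. If $m_B=0$, then $P^{(B)}_0$ is a constant, and it must vanish because $B(0)=0$. In that case $R_B\equiv0$, which contradicts $b_1\neq0$ whenever $M_B\ge1$. So the matching condition itself forces $m_B\ge1$ in any admissible approximant, and I would state this as the standing hypothesis ($m_B\ge1$) of the lemma.

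I would also note that the Padé approximant is assumed to exist with $b_0=1$, as in \eqref{pseq2}. Once that is granted, $N_{[P]}$ and $L_{[P]}$ are the functionals given by
\begin{equation*}
\langle N_{[P]}\,|\,x^k\rangle=k!\,[t^k]R_A(t),\qquad \langle L_{[P]}\,|\,x^k\rangle=k!\,[t^k]R_B(t).
\end{equation*}
Their delta and invertibility properties follow from Step 2, and uniqueness follows from the injectivity of the umbral correspondence.
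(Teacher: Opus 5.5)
There is a real problem with which functionals you build.

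Your Steps 1--3 are correct, and they are more careful than the paper. The paper simply asserts $R_A(0)\neq 0$, $R_B(0)=0$ and $R_B'(0)\neq 0$. It never addresses whether the approximant exists with $Q(0)=1$, or the need for $m_B\ge 1$.

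You attach $N_{[P]}$ to the indicator $R_A(t)$ and $L_{[P]}$ to $R_B(t)$. That meets the bare wording of the lemma. Read literally, though, the wording is nearly vacuous: any delta functional and any invertible functional would do, and ``unique'' can only mean ``uniquely determined by a prescribed construction.'' Your pair is not the one the lemma is meant to deliver. The lemma is introduced as producing the functionals associated with the generating function \eqref{pseq6}, i.e.\ the pair for which $s_n^{[P]}$ is Sheffer in Roman's sense. For an invertible $g$ and a delta $f$, the Sheffer sequence has generating function $e^{x\bar f(t)}/g(\bar f(t))$, where $\bar f$ is the compositional inverse of $f$. Matching this with $R_A(t)e^{xR_B(t)}$ forces $f(t)=R_B^{-1}(t)$ and $g(t)=1/R_A(R_B^{-1}(t))$. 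This is exactly what the paper constructs. The formal inverse function theorem gives $R_B^{-1}$, which is a delta series because $R_B$ is one. The series $R_A(R_B^{-1}(t))$ has constant term $R_A(0)\neq 0$, so its reciprocal is invertible.

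With your pair, the Sheffer sequence has generating function $e^{xR_B^{-1}(t)}/R_A(R_B^{-1}(t))$. That is a different family, and the later results break:
\begin{itemize}
\item The biorthogonality \eqref{pseq8} fails. Already for $R_B(t)=t$ you get $\langle N_{[P]}L_{[P]}^{k}\mid s_n^{[P]}(x)\rangle=n!\,[t^n]\,t^kR_A(t)^2$ instead of $n!\,\delta_{n,k}$.
\item The associated delta operator would be $R_B(D)$ rather than $Q_{[P]}=R_B^{-1}(D)$ of \eqref{pseq24}. It lowers $s_n^{[P]}$ only if $R_B(R_B(t))=t$.
\item The Lagrange inversion step \eqref{pseq35} needs $\langle L_{[P]}^k\mid x^n\rangle=n!\,[t^n]\,(R_B^{-1}(t))^k$.
\end{itemize}

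The missing idea is the compositional inversion, and the repair is short. Keep Steps 1--3, since they supply exactly the hypotheses the inversion needs: $R_B$ is a delta series and $R_A(0)\neq 0$. Then define $L_{[P]}$ and $N_{[P]}$ through $R_B^{-1}(t)$ and $1/R_A(R_B^{-1}(t))$. Uniqueness then follows, as you say, from the bijectivity of the umbral correspondence, together with the uniqueness of compositional and multiplicative inverses.
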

	
	\begin{proof}
		Since
		\begin{equation*}
			R_B(0)=0
			\qquad\text{and}\qquad
			R'_B(0)\neq 0,
		\end{equation*}
		the formal inverse function theorem guarantees the existence of a unique compositional inverse $R_B^{-1}(t)\in F$.\\
		Let $L_{[P]}\in \mathbb{P}^{*}$ denote the linear functional whose indicator is $R_B^{-1}(t)$.\\
		Therefore, $L_{[P]}$ is a delta functional because $R_B^{-1}(0)=0$ and $\bigl(R_B^{-1}\bigr)'(0)\neq 0$, the series $R_B^{-1}(t)$ is a delta series.
		
		Next, consider the composition $R_A\bigl(R_B^{-1}(t)\bigr)$.\\
		Since $R_B^{-1}(0)=0$, it follows that
		\begin{equation*}
			R_A\bigl(R_B^{-1}(0)\bigr) = R_A(0)\neq 0.
		\end{equation*}
		Therefore, the series $R_A\bigl(R_B^{-1}(t)\bigr)$ has a nonzero constant term and admits a unique multiplicative inverse in $F$.\\
		Again, let $N_{[P]}\in \mathbb{P}^{*}$ be the linear functional whose indicator is
		\begin{equation*}
			\frac{1}{R_A\bigl(R_B^{-1}(t)\bigr)}.
		\end{equation*}
		Since above series has nonzero constant term, the functional $N_{[P]}$ is invertible.
		
		Finally, the uniqueness of $L_{[P]}$ follows from the uniqueness of the compositional inverse $R_B^{-1}(t)$, while the uniqueness of $N_{[P]}$ follows from the uniqueness of the multiplicative inverse of $R_A\bigl(R_B^{-1}(t)\bigr)$. Therefore, both functionals are uniquely determined by the Pad\'e approximants $R_A(t)$ and $R_B(t)$.
	\end{proof}
	
	We now show that the Pad\'e polynomial sequence constructed above forms a Sheffer sequence which we shall refer as the Pad\'e--Sheffer sequence. In particular, we identify the pair of functionals arising from the Pad\'e approximants and verify the associated biorthogonality relations.
	
	\begin{thm}
		Let $L_{[P]},N_{[P]}\in \mathbb{P}^{*}$ be the two functionals uniquely determined by the Pad\'e approximants $R_A(t)$ and $R_B(t)$, respectively. Then the Pad\'e--polynomial sequence $s_n^{[P]}(x)$ is the Sheffer sequence associated with the pair $(N_{[P]},L_{[P]})$, and it satisfies the biorthogonality relation
		\begin{equation}\label{pseq8}
			\bigl\langle
			N_{[P]}L_{[P]}^{\,k}
			\,\bigm|\,
			s_n^{[P]}(x)
			\bigr\rangle
			=
			n!\,\delta_{n,k},
			\qquad
			n,k\ge 0.
		\end{equation}
		Moreover, the Pad\'e Sheffer sequence is unique.
	\end{thm}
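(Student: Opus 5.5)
The argument runs inside Roman's umbral calculus. Let $g(t)=1/R_A(R_B^{-1}(t))$ be the indicator of $N_{[P]}$ and $f(t)=R_B^{-1}(t)$ the indicator of $L_{[P]}$, as constructed in the preceding Lemma. Since $g$ is invertible and $f$ is a delta series, Roman's existence and uniqueness theorem for Sheffer sequences gives a unique sequence $\{\tilde s_n(x)\}$ with $\langle g(t)f(t)^k \mid \tilde s_n(x)\rangle = n!\,\delta_{n,k}$. The same theory gives the standard characterization of its exponential generating function:
\begin{equation*}
\sum_{n\ge 0}\tilde s_n(x)\frac{t^n}{n!}=\frac{1}{g(\bar f(t))}\exp\bigl(x\bar f(t)\bigr),
\end{equation*}
where $\bar f$ is the compositional inverse of $f$.

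Next I substitute the Pad\'e data. We have $\bar f=(R_B^{-1})^{-1}=R_B$, and
\begin{equation*}
g(\bar f(t))=\frac{1}{R_A\bigl(R_B^{-1}(R_B(t))\bigr)}=\frac{1}{R_A(t)}.
\end{equation*}
The generating function of $\tilde s_n$ is therefore exactly $R_A(t)\exp(xR_B(t))$, which is \eqref{pseq6}. Coefficients of $t^n/n!$ in $\mathbb{K}[x][[t]]$ are unique, so $\tilde s_n=s_n^{[P]}$. Hence $s_n^{[P]}$ is Sheffer for $(N_{[P]},L_{[P]})$, and the biorthogonality \eqref{pseq8} is inherited from the defining property.

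As a self-contained check of \eqref{pseq8}, I would also verify it directly. The evaluation functional $e^{yt}$ satisfies $\langle e^{yt}\mid p(x)\rangle=p(y)$. Apply $\langle g(t)f(t)^k\mid\cdot\rangle$ to $\sum_n s_n^{[P]}(x)\,u^n/n!=R_A(u)e^{xR_B(u)}$. The key identity is
\begin{equation*}
\bigl\langle h(t)\bigm| e^{xc}\bigr\rangle=h(c)
\end{equation*}
for a series $c$ with zero constant term. It gives $R_A(u)\,g(R_B(u))\,f(R_B(u))^k=u^k$. Comparing coefficients of $u^n/n!$ then yields $n!\,\delta_{n,k}$.

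Uniqueness follows because a Sheffer sequence is determined by its pair of indicators. The indicators are unique by the Lemma, which relies on the uniqueness of compositional and multiplicative inverses.

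The main obstacle is justifying the termwise pairing and the substitution $\langle h(t)\mid e^{xR_B(u)}\rangle = h(R_B(u))$ in $\mathbb{K}[[u]]$. This is a formal-convergence point: $R_B(u)$ has zero constant term, so each coefficient of $u^n$ involves only finitely many terms. I would handle it by expanding $e^{xR_B(u)}=\sum_j x^jR_B(u)^j/j!$ and using $\langle t^i\mid x^j\rangle=j!\,\delta_{ij}$.
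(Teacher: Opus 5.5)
Your proposal is correct and takes essentially the same route as the paper. You identify the generating function through the standard Sheffer formula $\frac{1}{g(\bar f(t))}e^{x\bar f(t)}$, using $\bar f=R_B$ and $g(R_B(t))=1/R_A(t)$. You then verify biorthogonality by pairing $N_{[P]}L_{[P]}^{\,k}$ against $R_A(u)e^{xR_B(u)}$ via $\langle h(t)\mid e^{xc}\rangle=h(c)$, which gives $u^k$.

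Your additions are small gains in rigor: the appeal to Roman's existence--uniqueness theorem and the justification of termwise pairing (the valuation of $R_B(u)^j$ is at least $j$). Your naming of $f,g$ is swapped relative to the paper, following Roman's convention, which is harmless.
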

	
	\begin{proof}
		By Lemma~$1$, $L_{[P]}$ is a delta functional and $N_{[P]}$ is invertible. Let $f(t)$ and $g(t)$ denote the indicator series of $N_{[P]}$ and $L_{[P]}$, respectively.\\
		By construction,
		\begin{equation*}
			g(t)=R_B^{-1}(t),
			\qquad
			f(t)=\frac{1}{R_A\bigl(R_B^{-1}(t)\bigr)}.
		\end{equation*}
		
		Recall that the general exponential generating function of a Sheffer sequence associated with the pair $(f,g)$ is
		\begin{equation}\label{pseq9}
			\sum_{n=0}^{\infty} s_n(x)\,\frac{t^n}{n!}
			=
			\frac{1}{f\bigl(g^{-1}(t)\bigr)}\,
			\exp\bigl(x\,g^{-1}(t)\bigr).
		\end{equation}
		For the Pad\'e--polynomial sequence $s_n^{[P]}(x)$, we have
		\begin{equation*}
			g^{-1}(t)=R_B(t),
		\end{equation*}
		which gives
		\begin{equation}\label{pseq10}
			\exp\bigl(x\,g^{-1}(t)\bigr)
			=
			\exp\bigl(xR_B(t)\bigr).
		\end{equation}
		Also,
		\begin{align*}
			f\bigl(g^{-1}(t)\bigr)
			&=
			f\bigl(R_B(t)\bigr)
			=
			\frac{1}{R_A\bigl(R_B^{-1}(R_B(t))\bigr)}
			=
			\frac{1}{R_A(t)},
		\end{align*}
		so that
		\begin{equation}\label{pseq11}
			\frac{1}{f\bigl(g^{-1}(t)\bigr)} = R_A(t).
		\end{equation}
		In view of \eqref{pseq10} and \eqref{pseq11}, the generating function \eqref{pseq9} associated with $(N_{[P]},L_{[P]})$ coincides with the Pad\'e--polynomial generating function \eqref{pseq6}. Hence the polynomials $\{s_n^{[P]}(x)\}_{n\ge 0}$ form the Sheffer sequence associated with the pair $(N_{[P]},L_{[P]})$.
		
		Now, let
		\begin{equation}\label{pseq13}
			F(x,t)
			=
			\sum_{n=0}^{\infty}
			s_n^{[P]}(x)\,\frac{t^n}{n!}
			=
			R_A(t)\,\exp\bigl(xR_B(t)\bigr)
		\end{equation}
		denote the generating function of the Pad\'e--Sheffer sequence.\\ The indicator of the composite functional $N_{[P]}L_{[P]}^{\,k}$ is
		\begin{equation*}
			h_k(t)=f(t)\,g(t)^{k}.
		\end{equation*}
		Using the basic evaluation identity
		\begin{equation*}
			\bigl\langle h(t) | \exp\bigl(xB(t)\bigr) \bigr\rangle = h\bigl(B(t)\bigr),
		\end{equation*}
		valid for any formal power series $h(t)$, we obtain
		\begin{align*}
			\bigl\langle N_{[P]}L_{[P]}^{\,k} | F(x,t) \bigr\rangle
			=
			R_A(t)\,\bigl\langle f(t)g(t)^{k} | \exp\bigl(xR_B(t)\bigr) \bigr\rangle,
		\end{align*}
		which is further rewritten as
		\begin{equation}\label{pseq12}
			\bigl\langle N_{[P]}L_{[P]}^{\,k} | F(x,t) \bigr\rangle
			=
			R_A(t)\,f\!\bigl(R_B(t)\bigr)\,g\!\bigl(R_B(t)\bigr)^{k}.
		\end{equation}
		Substituting the expressions for $f$ and $g$ gives
		\begin{equation*}
			f\bigl(R_B(t)\bigr)
			=
			\frac{1}{R_A\bigl(R_B^{-1}(R_B(t))\bigr)}
			=
			\frac{1}{R_A(t)},
			\qquad
			g\bigl(R_B(t)\bigr)
			=
			R_B^{-1}(R_B(t))
			=
			t,
		\end{equation*}
		Put the above expression in  equation \eqref{pseq12}, we have
		\begin{equation*}
			\bigl\langle N_{[P]}L_{[P]}^{\,k} | F(x,t) \bigr\rangle
			=
			t^{k}.
		\end{equation*}
		
		Now, from generating function \eqref{pseq13}, it follows that
		\begin{equation*}
			\sum_{n=0}^{\infty}
			\bigl\langle N_{[P]}L_{[P]}^{\,k} | s_n^{[P]}(x) \bigr\rangle
			\frac{t^n}{n!}=
			t^{k}.
		\end{equation*}
		Comparing the coefficient of $t$ in the above equation yields
		the biorthogonality relation \eqref{pseq8}.\\
		
		Finally, let $\{p_n(x)\}_{n\ge 0}$ be any other Pad\'e--Sheffer sequence associated with the pair $(N_{[P]},L_{[P]})$, and suppose that its generating function is given by
		\begin{equation*}
			\sum_{n=0}^{\infty} p_n(x)\,\frac{t^n}{n!}
			=
			R_A(t)\,\exp\bigl(xR_B(t)\bigr).
		\end{equation*}
		Since the coefficients of a formal power series in $t$ are uniquely determined, it follows that $p_n(x)=s_n^{[P]}(x)$ for all $n\ge 0$. This shows that the Pad\'e--Sheffer family is uniquely associated with the pair $(N_{[P]},L_{[P]})$.
	\end{proof}
	Both families form polynomial bases of $\mathbb{P}$ so each Pad\'e--Sheffer polynomial admits a unique expansion in terms of the original Sheffer basis. Let $\{s_n(x)\}_{n\ge 0}$ denote the classical Sheffer sequence associated with the pair $(N,L)$ and the generating data $(A(t),B(t))$, and let $\{s^{[P]}_n(x)\}_{n\ge 0}$ denote the corresponding Pad\'e--Sheffer sequence associated with the pair $(N_{[P]},L_{[P]})$ and the generating data $(R_A(t),R_B(t))$. Then there exist unique coefficients $\{c_{n,k}\}$ such that
	\begin{equation}\label{pseq14}
		s^{[P]}_n(x) = \sum_{k=0}^{n} c_{n,k}\,s_k(x),
		\qquad n\ge 0.
	\end{equation}
	The coefficients $c_{n,k}$ describe the transition from the classical Sheffer basis to its rationally deformed counterpart.
	
	The relationship between the Pad\'e--Sheffer family associated with $(R_A(t),R_B(t))$ and the original Sheffer sequence associated with $(A(t),B(t))$ is investigated in the following theorem.
	
	\begin{thm}
		For each fixed integer $k\ge 0$, the connection coefficients $c_{n,k}$ between the Pad\'e and the original Sheffer sequence satisfy the exponential generating function
		\begin{equation}\label{pseq17}
			\sum_{n=k}^{\infty} c_{n,k}\,\frac{t^n}{n!}
			=
			\frac{R_A(t)}{A\!\bigl(B^{-1}(R_B(t))\bigr)}\,
			\frac{\bigl[B^{-1}(R_B(t))\bigr]^k}{k!}.
		\end{equation}
	\end{thm}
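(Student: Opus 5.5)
The plan is to compare generating functions directly. Sum \eqref{pseq14} against $t^n/n!$ and interchange the order of summation, which is legitimate in $\mathbb{K}[x][[t]]$ because $c_{n,k}=0$ for $k>n$. This gives
\begin{equation*}
R_A(t)\exp\bigl(xR_B(t)\bigr)=\sum_{k=0}^{\infty} s_k(x)\,C_k(t),
\qquad
C_k(t):=\sum_{n=k}^{\infty}c_{n,k}\frac{t^n}{n!}.
\end{equation*}
The goal is then to produce a second expansion of the left-hand side in the basis $\{s_k(x)\}$ with explicit power-series coefficients, and to read off $C_k(t)$ by uniqueness.

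For that second expansion, set $u(t):=B^{-1}(R_B(t))$. Since $R_B(t)$ is a delta series (Lemma~1) and $B^{-1}$ is a delta series, $u(t)$ is a delta series, so $u(0)=0$. In particular $u(t)^k=O(t^k)$, and substituting $u(t)$ into formal series is well defined. Now substitute $t\mapsto u(t)$ in the classical generating function \eqref{pseq1}:
\begin{equation*}
\sum_{k=0}^{\infty}s_k(x)\frac{u(t)^k}{k!}=A\bigl(u(t)\bigr)\exp\bigl(xB(u(t))\bigr)=A\bigl(B^{-1}(R_B(t))\bigr)\exp\bigl(xR_B(t)\bigr).
\end{equation*}
The factor $A(B^{-1}(R_B(t)))$ has constant term $A(0)=a_0\neq0$, so it is invertible in $F$. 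Multiplying by $R_A(t)/A(B^{-1}(R_B(t)))$ gives
\begin{equation*}
R_A(t)\exp\bigl(xR_B(t)\bigr)=\sum_{k=0}^{\infty}s_k(x)\,\frac{R_A(t)}{A\bigl(B^{-1}(R_B(t))\bigr)}\frac{\bigl[B^{-1}(R_B(t))\bigr]^k}{k!}.
\end{equation*}

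To conclude, I compare the two expansions. The coefficient series on the right is $O(t^k)$, so for each $n$ only $k\le n$ contribute to the coefficient of $t^n/n!$. Extracting that coefficient from both expansions yields two representations of $s^{[P]}_n(x)$ as finite combinations of $s_0,\dots,s_n$. Since $\deg s_k=k$, the family $\{s_k\}$ is a basis of $\mathbb{P}$, so the two sets of coefficients agree for every $n$ and $k$. Summing over $n$ then gives \eqref{pseq17}.

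The main obstacle is justifying the substitution and the coefficient comparison formally: the sums over $k$ must be locally finite in each $t$-degree, and uniqueness must pass from the basis property termwise. The point that settles this is $\operatorname{ord}_t u(t)\ge1$. An alternative route would use the functionals: by Theorem~1 applied to the classical pair, $c_{n,k}=\langle N L^k\,|\,s^{[P]}_n(x)\rangle/k!$, and then the evaluation identity $\langle h(t)\,|\,\exp(xR_B(t))\rangle=h(R_B(t))$ with $h=f_{A,B}(t)\,g_{A,B}(t)^k$ gives the same formula. I would mention this only as a cross-check.
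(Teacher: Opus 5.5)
Your proof is correct, and its main line is a genuinely different route from the paper's.

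\textbf{The paper's route.} It stays inside the umbral framework, and is essentially what you list as your cross-check:
\begin{itemize}
\item the second expansion theorem for the classical pair $(N,L)$ gives $c_{n,k}=\frac{1}{k!}\langle NL^{k}\,|\,s_n^{[P]}(x)\rangle$;
\item it sums this against $t^n/n!$ and inserts the Pad\'e generating function;
\item it evaluates $\langle NL^k\,|\,e^{xR_B(t)}\rangle$ by the identity $\langle h(u)\,|\,e^{xy}\rangle=h(y)$, after identifying the indicator of $NL^k$ as $[B^{-1}(u)]^k/A(B^{-1}(u))$.
\end{itemize}

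\textbf{Your route.} You substitute $u(t)=B^{-1}(R_B(t))$ directly into the classical generating function and use $B(u(t))=R_B(t)$. You then rescale by $R_A(t)/A(u(t))$ and conclude by linear independence of $\{s_k\}$ after comparing with the summed form of \eqref{pseq14}.

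\textbf{What each buys.} Your argument is more elementary and self-contained. It needs only composition in $\mathbb{K}[x][[t]]$, local finiteness from $\operatorname{ord}_t u\ge 1$, invertibility of $A(u(t))$, and the basis property. It avoids the convention-dependent identification of indicators and the coefficientwise extension of the pairing to $\mathbb{K}[x][[t]]$. It also makes transparent that only $R_B(0)=0$ and $a_0\neq 0$ are used. Structurally, it is the multiplication in the exponential Riordan group: the connection matrix is the array of $(R_A,R_B)$ times the inverse of the array of $(A,B)$. The paper's route, in exchange, yields the intermediate closed form $c_{n,k}=\frac{1}{k!}\langle NL^{k}\,|\,s_n^{[P]}(x)\rangle$. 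It also keeps the result in the same functional/operator language as the rest of the paper, namely the biorthogonality and delta operators.
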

	
	\begin{proof}
		Since $\{s_n(x)\}_{n\ge 0}$ is the $(N,L)$--Sheffer sequence associated with the formal series $A(t)$ and $B(t)$, the second expansion theorem \cite{Roman1984} of umbral calculus yields
		\begin{equation*}
			p(x)
			=
			\sum_{k\ge 0}
			\frac{1}{k!}\,\bigl\langle NL^{k} | p(x)\bigr\rangle\,s_k(x),
			\qquad p(x)\in \mathbb{P}.
		\end{equation*}
		Taking $p(x)=s_n^{[P]}(x)$ in the above identity and comparing with \eqref{pseq14}, we obtain
		\begin{equation}\label{pseq15}
			c_{n,k}
			=
			\frac{1}{k!}\,\bigl\langle NL^{k} | s_n^{[P]}(x)\bigr\rangle.
		\end{equation}
		Multiplying both sides by $t^{n}/n!$ in equation \eqref{pseq15} and summing over $n\ge 0$, we have
		\begin{equation*}
			\sum_{n=0}^{\infty} c_{n,k}\,\frac{t^n}{n!}
			=
			\frac{1}{k!}\,
			\Biggl\langle NL^{k}
			\Biggm|
			\sum_{n=0}^{\infty} s_n^{[P]}(x)\,\frac{t^n}{n!}
			\Biggr\rangle.
		\end{equation*}
		In view of the generating function of the Pad\'e--Sheffer sequence \eqref{pseq6}, the above equation becomes
		\begin{equation}\label{pseq16}
			\sum_{n=0}^{\infty} c_{n,k}\,\frac{t^n}{n!}
			=
			\frac{R_A(t)}{k!}\,
			\bigl\langle NL^{k} | e^{xR_B(t)}\bigr\rangle.
		\end{equation}
		
		Since, the indicator of the original invertible functional $N$ is
		\begin{equation*}
			\frac{1}{A\!\bigl(B^{-1}(u)\bigr)},
		\end{equation*}
		and the indicator of $L^{k}$ is
		\begin{equation*}
			\bigl[B^{-1}(u)\bigr]^{k}.
		\end{equation*}
		Therefore, the indicator of the composite functional $NL^{k}$ is
		\begin{equation*}
			\frac{\bigl[B^{-1}(u)\bigr]^{k}}{A\!\bigl(B^{-1}(u)\bigr)}.
		\end{equation*}
		Using the evaluation identity
		\begin{equation*}
			\bigl\langle h(u)| e^{xy}\bigr\rangle = h(y),
		\end{equation*}
		with $h(u)=\dfrac{\bigl[B^{-1}(u)\bigr]^{k}}{A\!\bigl(B^{-1}(u)\bigr)}$ and $y=R_B(t)$, we obtain
		\begin{equation*}
			\bigl\langle NL^{k} | e^{xR_B(t)}\bigr\rangle
			=
			\frac{\bigl[B^{-1}(R_B(t))\bigr]^{k}}{A\!\bigl(B^{-1}(R_B(t))\bigr)}.
		\end{equation*}
		Substituting above expression into the equation \eqref{pseq16} yields
		\eqref{pseq17}.
	\end{proof}
	
	\noindent
	Extracting coefficients from the generating function \eqref{pseq17} yields an explicit representation for the connection constants.
	
	\begin{cor}
		For integers $n\ge k\ge 0$, the connection coefficients are given by
		\begin{equation}\label{pseq18}
			c_{n,k}
			=
			\frac{n!}{k!}\,[t^{n}]
			\left\{
			\frac{R_A(t)}{A\!\bigl(B^{-1}(R_B(t))\bigr)}\,
			\bigl[B^{-1}(R_B(t))\bigr]^{k}
			\right\}.
		\end{equation}
	\end{cor}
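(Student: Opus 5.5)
The corollary follows from Theorem~2 by extracting coefficients; no further umbral input is needed. Theorem~2 gives, for fixed $k\ge 0$, the identity \eqref{pseq17} in $\mathbb{K}[[t]]$:
\begin{equation*}
\sum_{n=k}^{\infty} c_{n,k}\,\frac{t^n}{n!}
=
\frac{1}{k!}\,
\frac{R_A(t)}{A\bigl(B^{-1}(R_B(t))\bigr)}\,
\bigl[B^{-1}(R_B(t))\bigr]^k .
\end{equation*}
First I check that the right-hand side is a well-defined formal power series. Since $R_B(0)=0$ and $B^{-1}$ is a delta series, the composition $\phi(t):=B^{-1}(R_B(t))$ is defined and satisfies $\phi(0)=0$. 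Moreover
\begin{equation*}
\phi'(0)=\frac{R_B'(0)}{B'(0)}=1,
\end{equation*}
because the Pad\'e approximant matches $B$ through order $M_B\ge 1$. (If $M_B=0$ one only needs $R_B'(0)\neq0$, which is already used in Lemma~1.) Also $A(\phi(t))$ has constant term $A(0)\neq 0$, so it is invertible in $F$.

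Next I apply the linear coefficient-extraction functional $[t^n]$ to both sides. On the left this gives $c_{n,k}/n!$ for $n\ge k$. Multiplying by $n!$ and pulling the constant $1/k!$ outside $[t^n]$ yields \eqref{pseq18}.

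Finally, I confirm the range $n\ge k$ is consistent. Since $\phi(t)^k=O(t^k)$, the coefficient $[t^n]$ of the right-hand side vanishes for $n<k$. This agrees with the triangularity $c_{n,k}=0$ for $k>n$ in \eqref{pseq14}, so the formula is meaningful exactly for $n\ge k\ge 0$.

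The only step needing any care is the well-definedness of the composition $\phi$ and the invertibility of $A(\phi(t))$ in $\mathbb{K}[[t]]$. Both follow from $R_B(0)=0$ and $A(0)\neq0$, so I expect no real obstacle: the corollary is an immediate reading-off of coefficients from Theorem~2.
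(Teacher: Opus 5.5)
Your proposal is correct and is essentially the paper's own argument: apply the coefficient extraction $[t^{n}]$ to both sides of \eqref{pseq17} and multiply by $n!$, with the well-definedness and triangularity checks as harmless extras. Only $R_B(0)=0$ and $A(0)\neq 0$ are actually needed; the computation $\phi'(0)=1$ is superfluous, and the $M_B=0$ aside is misstated, since $M_B=0$ forces $R_B\equiv 0$ and hence $R_B'(0)=0$, so that case cannot satisfy Lemma~1's hypothesis.
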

	
	\begin{proof}
		The result follows directly from Theorem~$2$ by applying the coefficient extraction operator $[t^{n}]$ to both sides of \eqref{pseq17}.
	\end{proof}
	We now investigate how far the Pad\'e--Sheffer sequence is differed from the original Sheffer sequence. In particular, we show that the approximation orders of the rational approximants $R_A(t)$ and $R_B(t)$ determine the range of degrees on which the two sequences coincide exactly.
	
	\begin{thm}
		Assume that the Pad\'e approximants satisfy
		\begin{equation*}
			R_A(t) = A(t) + \mathcal{O}\bigl(t^{M_A+1}\bigr),
			\qquad
			R_B(t) = B(t) + \mathcal{O}\bigl(t^{M_B+1}\bigr),
		\end{equation*}
		and let
		\begin{equation*}
			M = \min\{M_A,M_B\}.
		\end{equation*}
		Then
		\begin{equation}\label{pseq22}
			s_n^{[P]}(x) = s_n(x),
			\qquad 0\le n\le M.
		\end{equation}
	\end{thm}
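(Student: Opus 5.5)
The plan is to compare the two generating functions directly in $\mathbb{K}[x][[t]]$ modulo $t^{M+1}$. Since $s_n(x)$ and $s_n^{[P]}(x)$ are $n!$ times the coefficients of $t^n$ in $A(t)\exp(xB(t))$ and $R_A(t)\exp(xR_B(t))$, it suffices to show that
\begin{equation*}
R_A(t)\exp\bigl(xR_B(t)\bigr) \equiv A(t)\exp\bigl(xB(t)\bigr) \pmod{t^{M+1}}
\end{equation*}
in $\mathbb{K}[x][[t]]$. Here congruence means equality of all coefficients of $t^0,\dots,t^M$, each coefficient being a polynomial in $x$.

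First, since $M\le M_A$ and $M\le M_B$, the hypotheses give $R_A\equiv A$ and $R_B\equiv B \pmod{t^{M+1}}$. Write $R_B(t)=B(t)+E(t)$ with $E(t)\in t^{M+1}\mathbb{K}[[t]]$. Both $B$ and $R_B$ have zero constant term, so $\exp(xB(t))$ and $\exp(xE(t))$ are well-defined elements of $\mathbb{K}[x][[t]]$. The identity $\exp(x(B+E))=\exp(xB)\exp(xE)$ holds formally, because both exponentials are substitutions of series without constant term into $e^{y}$.

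Next, $\exp(xE(t))=1+\sum_{j\ge1}x^jE(t)^j/j!$, and each $E^j$ with $j\ge1$ lies in $t^{M+1}\mathbb{K}[[t]]$. Hence $\exp(xE)\equiv 1\pmod{t^{M+1}}$ and $\exp(xR_B)\equiv\exp(xB)$. The ideal generated by $t^{M+1}$ in $\mathbb{K}[x][[t]]$ is closed under multiplication by arbitrary elements, so products of congruent series are congruent. This gives $R_A\exp(xR_B)\equiv A\exp(xB)$. Extracting $[t^n]$ for $0\le n\le M$ and multiplying by $n!$ yields \eqref{pseq22}.

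As a cross-check, this agrees with Theorem~2. The composition $B^{-1}(R_B(t))\equiv B^{-1}(B(t))=t \pmod{t^{M+1}}$, so the generating function of $c_{n,k}$ reduces to $t^k/k!$ modulo $t^{M+1}$. This gives $c_{n,k}=\delta_{n,k}$ for $n\le M$.

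The only point requiring care, and the main obstacle, is justifying that truncation commutes with the exponential composition. One must check that a perturbation of order $t^{M+1}$ inside $\exp(x\,\cdot)$ only affects coefficients of order at least $M+1$. I would settle this by the factorization $\exp(x(B+E))=\exp(xB)\exp(xE)$ above, rather than by a term-by-term expansion of $(B+E)^j$. The factorization avoids needing to track $x$-degrees or convergence, since the result is an identity of formal power series.
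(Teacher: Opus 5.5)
Your proof is correct, but it follows a different route from the paper.

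\textbf{The paper's route.} The paper derives the statement from Theorem~2. It shows $B^{-1}(R_B(t))=t+\mathcal{O}(t^{M+1})$ and hence $A\bigl(B^{-1}(R_B(t))\bigr)=A(t)+\mathcal{O}(t^{M+1})$. The generating function \eqref{pseq17} of the connection coefficients then reduces to $t^k/k!+\mathcal{O}(t^{M+1})$. This gives $c_{n,k}=\delta_{n,k}$ for $0\le n\le M$, and the expansion \eqref{pseq14} yields $s_n^{[P]}(x)=s_n(x)$. That is exactly the argument you list only as a cross-check.

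\textbf{Your route.} Your main argument bypasses the connection coefficients. You compare $R_A(t)\exp(xR_B(t))$ with $A(t)\exp(xB(t))$ modulo $t^{M+1}$ in $\mathbb{K}[x][[t]]$. The key step is the factorization $\exp(xR_B)=\exp(xB)\exp(xE)$ with $E\in t^{M+1}\mathbb{K}[[t]]$, and it is justified correctly.

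\textbf{What your route buys.} It is more elementary and slightly more general. It needs neither the compositional inverse $B^{-1}$, nor $A(0)\neq 0$ (which the paper uses to divide by $A(B^{-1}(R_B(t)))$), nor the second expansion theorem behind Theorem~2. It uses only that $B$ and $R_B$ have zero constant term. It also gives the first deviation directly in the monomial basis. Pushing your computation one order further yields $s^{[P]}_{M+1}(x)-s_{M+1}(x)=(M+1)!\bigl(\epsilon_A+x\,A(0)\,\epsilon_B\bigr)$, where $\epsilon_A=[t^{M+1}](R_A-A)$ and $\epsilon_B=[t^{M+1}](R_B-B)$.

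\textbf{What the paper's route buys.} It presents exactness as a special case of the connection-coefficient formula of Corollary~1. The same computation therefore describes the deviation for every $n>M$ in the basis $\{s_k(x)\}$.
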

	
	\begin{proof}
		By the hypothesis on $R_B(t)$ and the definition of $M$, we have
		\begin{equation*}
			R_B(t) = B(t) + \mathcal{O}\bigl(t^{M+1}\bigr).
		\end{equation*}
		Since $B^{-1}(t)$ is a formal power series with nonvanishing linear coefficient, composition preserves the order of agreement, and we obtain
		\begin{equation}\label{pseq19}
			B^{-1}\bigl(R_B(t)\bigr)
			=
			B^{-1}\bigl(B(t) + \mathcal{O}\bigl(t^{M+1}\bigr)\bigr)
			=
			t + \mathcal{O}\bigl(t^{M+1}\bigr).
		\end{equation}
		Similarly, the hypothesis on $R_A(t)$ gives
		\begin{equation}\label{pseq20}
			R_A(t) = A(t) + \mathcal{O}\bigl(t^{M+1}\bigr).
		\end{equation}
		Also, from the estimate \eqref{pseq19}, we have
		\begin{equation}\label{pseq21}
			A\bigl(B^{-1}(R_B(t))\bigr)
			=
			A\bigl(t + \mathcal{O}\bigl(t^{M+1}\bigr)\bigr)
			=
			A(t) + \mathcal{O}\bigl(t^{M+1}\bigr).
		\end{equation}
		
		Substituting estimates \eqref{pseq19}--\eqref{pseq21} into the generating function of the connection coefficients \eqref{pseq17}, we get
		\begin{equation*}
			\sum_{n=k}^{\infty} c_{n,k}\,\frac{t^n}{n!}
			=
			\frac{A(t) + \mathcal{O}\bigl(t^{M+1}\bigr)}{A(t) + \mathcal{O}\bigl(t^{M+1}\bigr)}\,
			\frac{\bigl(t + \mathcal{O}\bigl(t^{M+1}\bigr)\bigr)^{k}}{k!}.
		\end{equation*}
		Since
		\begin{equation*}
			\frac{A(t) + \mathcal{O}\bigl(t^{M+1}\bigr)}{A(t) + \mathcal{O}\bigl(t^{M+1}\bigr)}
			=
			1 + \mathcal{O}\bigl(t^{M+1}\bigr),
		\end{equation*}
		and
		\begin{equation*}
			\bigl(t + \mathcal{O}\bigl(t^{M+1}\bigr)\bigr)^{k}
			=
			t^{k} + \mathcal{O}\bigl(t^{M+1}\bigr),
		\end{equation*}
		it follows that
		\begin{equation*}
			\sum_{n=k}^{\infty} c_{n,k}\,\frac{t^n}{n!}
			=
			\frac{t^{k}}{k!} + \mathcal{O}\bigl(t^{M+1}\bigr).
		\end{equation*}
		
		So, for each fixed $k$, the series on the left agrees with $t^{k}/k!$ up to order $t^{M}$, and the first nonzero term appears at degree $n=k$. That is
		\begin{equation*}
			c_{n,k} = \delta_{n,k},
			\qquad 0\le n\le M,\ k\ge 0.
		\end{equation*}
		Substituting these coefficients into the connection expansion \eqref{pseq14} yields equation \eqref{pseq22}.
	\end{proof}
	
	\begin{rem}
		The connection formula in Corollary~$1$ provides an explicit description of the deformation induced by the Pad\'e approximation. In the limiting case where the approximants reproduce the original series exactly, that is,
		\begin{equation*}
			R_A(t) = A(t),
			\qquad
			R_B(t) = B(t),
		\end{equation*}
		we have
		\begin{equation*}
			B^{-1}\bigl(R_B(t)\bigr)
			=
			B^{-1}\bigl(B(t)\bigr)
			=
			t.
		\end{equation*}
		Consequently, the explicit formula \eqref{pseq18} reduces to
		\begin{equation*}
			c_{n,k}
			=
			\frac{n!}{k!}\,[t^{n}]\{t^{k}\}
			=
			\delta_{n,k},
			\qquad n,k\ge 0,
		\end{equation*}
		where $\delta_{n,k}$ denotes the Kronecker delta symbol. In other words, the connection coefficients satisfy
		\begin{equation*}
			c_{n,k}
			=
			\begin{cases}
				1, & \text{if } n=k,\\[0.2em]
				0, & \text{if } n\neq k.
			\end{cases}
		\end{equation*}
		Therefore the infinite lower-triangular matrix $C=(c_{n,k})_{n,k\ge 0} $ of connection coefficients has entries
		\begin{equation*}
			C_{n,k} = c_{n,k} = \delta_{n,k},
		\end{equation*}
		and is therefore equal to the identity matrix
		\begin{equation*}
			C =
			\begin{pmatrix}
				1 & 0 & 0 & 0 & \cdots \\[0.4em]
				0 & 1 & 0 & 0 & \cdots \\[0.4em]
				0 & 0 & 1 & 0 & \cdots \\[0.4em]
				0 & 0 & 0 & 1 & \cdots \\[0.4em]
				\vdots & \vdots & \vdots & \vdots & \ddots
			\end{pmatrix}.
		\end{equation*}
		In this case the Pad\'e--Sheffer sequence coincides with the original Sheffer sequence. In particular, the deformation introduced by the rational approximation vanishes identically.
	\end{rem}
	Now that the connection coefficients and exactness properties have been established, we next develop the operational analysis of the Pad\'e--Sheffer family and to the combinatorial consequences of its rational generating structure.
	\section{Operational Analysis and Combinatorial Consequences}
	We now turn to operational framework of the Pad\'e--Sheffer family after establishing the structural properties. We first derive the associated lowering operator and finite recurrence relations. The rational form of the generating function is then used to simplify Lagrange inversion and to reveal an underlying combinatorial automaton structure.\\
	
	A fundamental principle of umbral calculus is the correspondence between linear functionals and shift-invariant operators on the polynomial algebra $\mathbb{P}$. Applying this correspondence in the Pad\'e--Sheffer setting leads naturally to the associated delta operator.
	
	Let $L_{[P]}$ be the delta functional constructed in Lemma~$1$, whose indicator is
	\begin{equation*}
		g_{[P]}(t) = R_B^{-1}(t).
	\end{equation*}
	The Pad\'e delta operator associated with $L_{[P]}$ is the shift-invariant operator
	\begin{equation}\label{pseq24}
		Q_{[P]}
		=
		R_B^{-1}(D),
		\qquad
		D = \frac{d}{dx},
	\end{equation}
	obtained from the umbral correspondence between formal power series and shift-invariant operators.
	
	\begin{thm}
		The Pad\'e delta operator $Q_{[P]}$ satisfies
		\begin{equation}\label{pseq25}
			Q_{[P]} s_n^{[P]}(x)
			=
			n\,s_{n-1}^{[P]}(x),
			\qquad n\ge 1,
		\end{equation}
		with
		\begin{equation}\label{pseq26}
			Q_{[P]} s_0^{[P]}(x) = 0.
		\end{equation}
	\end{thm}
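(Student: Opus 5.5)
The plan is to apply $Q_{[P]}=R_B^{-1}(D)$ termwise in $x$ to the generating function \eqref{pseq13} and then compare coefficients in $t$. The basic fact is that $D\,e^{xR_B(t)}=R_B(t)\,e^{xR_B(t)}$. Hence, for any formal power series $h$, the shift-invariant operator $h(D)$ acts on $e^{xR_B(t)}$ as multiplication by $h(R_B(t))$. Equivalently, $h(D)$ multiplies $e^{xy}$ by $h(y)$, and we then substitute $y=R_B(t)$.

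This substitution is legitimate in $\mathbb{K}[x][[t]]$. Because $R_B(0)=0$, the coefficient of $t^n$ in $e^{xR_B(t)}$ is a polynomial in $x$ of degree at most $n$. Since $h(D)$ acts on polynomials as a finite sum, applying it coefficientwise is well defined, and the identity holds coefficient by coefficient in $t$.

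Taking $h=R_B^{-1}$ and using that $R_A(t)$ does not depend on $x$, we obtain
\begin{equation*}
Q_{[P]}F(x,t)=R_A(t)\,R_B^{-1}\bigl(R_B(t)\bigr)\,e^{xR_B(t)}=t\,F(x,t).
\end{equation*}
The key substitution here is $R_B^{-1}(R_B(t))=t$, which comes from Lemma~$1$, exactly as in the proof of Theorem~$1$. We then expand both sides:
\begin{equation*}
\sum_{n\ge0}Q_{[P]}s_n^{[P]}(x)\frac{t^n}{n!}=\sum_{n\ge0}s_n^{[P]}(x)\frac{t^{n+1}}{n!}=\sum_{n\ge1}n\,s_{n-1}^{[P]}(x)\frac{t^n}{n!}.
\end{equation*}
Comparing coefficients of $t^n/n!$ gives \eqref{pseq25} for $n\ge1$. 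At $t^0$ the right-hand side has no term, which gives \eqref{pseq26}. This is consistent with $s_0^{[P]}=R_A(0)$ being a constant and $R_B^{-1}(D)$ having zero constant term.

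The main obstacle is justifying that $h(D)e^{xy}=h(y)e^{xy}$ may be used with $y$ replaced by the series $R_B(t)$. I would handle it by working in $\mathbb{K}[x][[t]]$, where the degree bound above makes each coefficient computation finite. An alternative route is to invoke the standard umbral result that a Sheffer sequence for $(f,g)$ satisfies $g(D)s_n=ns_{n-1}$, together with Theorem~$1$.
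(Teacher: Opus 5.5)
Your proposal is correct and follows the paper's route: apply $Q_{[P]}=R_B^{-1}(D)$ to the generating function \eqref{pseq6}, use $h(D)e^{x\lambda}=h(\lambda)e^{x\lambda}$ with $\lambda=R_B(t)$ to get $Q_{[P]}F(x,t)=t\,F(x,t)$, and compare coefficients of $t^n/n!$. Your degree-bound argument (the coefficient of $t^n$ in $e^{xR_B(t)}$ has degree at most $n$ in $x$, so $h(D)$ acts coefficientwise as a finite sum in $\mathbb{K}[x][[t]]$) supplies a justification that the paper leaves implicit by simply citing the standard identity.
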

	\begin{proof}
		Applying operator \eqref{pseq24} in the both side of generating function \eqref{pseq6}, we get
		\begin{equation}\label{pseq23}
			\sum_{n=0}^{\infty}Q_{[P]}
			s_n^{[P]}(x)\,\frac{t^n}{n!}
			=
			R_A(t)\,R_B^{-1}(D)\,\exp\bigl(xR_B(t)\bigr).
		\end{equation}
		Using the standard identity \cite{Rota1973}
		\begin{equation*}
			h(D)\,e^{x\lambda} = h(\lambda)\,e^{x\lambda},
		\end{equation*}
		valid for every formal power series $h$, with
		\begin{equation*}
			h(u)=R_B^{-1}(u),
			\qquad
			\lambda=R_B(t),
		\end{equation*}
		we obtain
		\begin{equation*}
			R_B^{-1}(D)\,\exp\bigl(xR_B(t)\bigr)
			=
			R_B^{-1}\bigl(R_B(t)\bigr)\,\exp\bigl(xR_B(t)\bigr).
		\end{equation*}
		Substituting the above expression in generating relation \eqref{pseq23}, it follows that
		\begin{equation*}
			\sum_{n=0}^{\infty}Q_{[P]}
			s_n^{[P]}(x)\,\frac{t^n}{n!}
			=
			t\,R_A(t)\,\exp\bigl(xR_B(t)\bigr).
		\end{equation*}
		In view of generating function \eqref{pseq6}, above relation becomes
		\begin{equation*}
			t\sum_{n=0}^{\infty} s_n^{[P]}(x)\,\frac{t^n}{n!}
			=
			\sum_{n=1}^{\infty} n\,s_{n-1}^{[P]}(x)\,\frac{t^n}{n!}.
		\end{equation*}
		Comparing coefficients of $t^n/n!$ for $n\ge 1$ yields lowering relation \eqref{pseq25}, while the coefficient of $t^0$ yields \eqref{pseq26}.
	\end{proof}
	We next introduce two auxiliary series that will appear in the recurrence relation for the Pad\'e--Sheffer polynomials.
	\begin{lem}
		If $R_A(t)$ and $R_B(t)$ are rational functions, then
		\begin{equation}\label{pseq29}
			\alpha(t) = \frac{R_A'(t)}{R_A(t)}
			\qquad\text{and}\qquad
			\beta(t) = R_B'(t)
		\end{equation}
		are rational functions.
	\end{lem}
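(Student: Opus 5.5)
The plan is to use the fact that the rational functions $\mathbb{K}(t)$ form a field that is closed under formal differentiation. Write
\[
R_A(t)=\frac{P^{(A)}_{m_A}(t)}{Q^{(A)}_{n_A}(t)}, \qquad R_B(t)=\frac{P^{(B)}_{m_B}(t)}{Q^{(B)}_{n_B}(t)},
\]
as in \eqref{pseq7}, with polynomial numerators and denominators and $Q^{(A)}_{n_A}(0)=Q^{(B)}_{n_B}(0)=1$. Differentiating a quotient of polynomials by the quotient rule gives
\[
R'(t)=\frac{P'(t)Q(t)-P(t)Q'(t)}{Q(t)^2}.
\]
This is again a quotient of polynomials, and its denominator does not vanish at $t=0$. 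That already settles the claim for $\beta(t)=R_B'(t)$. Explicitly, $\beta$ has numerator $P^{(B)\prime}_{m_B}Q^{(B)}_{n_B}-P^{(B)}_{m_B}Q^{(B)\prime}_{n_B}$, of degree at most $m_B+n_B-1$, and denominator $(Q^{(B)}_{n_B})^2$.

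For $\alpha(t)$, I will compute the logarithmic derivative directly:
\[
\alpha(t)=\frac{R_A'(t)}{R_A(t)}=\frac{P^{(A)\prime}_{m_A}(t)}{P^{(A)}_{m_A}(t)}-\frac{Q^{(A)\prime}_{n_A}(t)}{Q^{(A)}_{n_A}(t)}
=\frac{P^{(A)\prime}_{m_A}Q^{(A)}_{n_A}-P^{(A)}_{m_A}Q^{(A)\prime}_{n_A}}{P^{(A)}_{m_A}\,Q^{(A)}_{n_A}}.
\]
This is a quotient of polynomials, so it is rational.

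The only point needing care is that $\alpha$ should also make sense as an element of $F$, since it enters a recurrence for formal power series. This requires $P^{(A)}_{m_A}(0)\neq 0$. The Pad\'e relations \eqref{pseq5} with $k=0$ give $P^{(A)}_{m_A}(0)=a_0\,Q^{(A)}_{n_A}(0)=a_0\neq 0$. Hence the denominator $P^{(A)}_{m_A}Q^{(A)}_{n_A}$ has nonzero constant term and is invertible in $F$. It follows that both $\alpha$ and $\beta$ are rational functions regular at the origin, whose Maclaurin expansions lie in $F$. I do not expect a genuine obstacle: the argument amounts to closure of $\mathbb{K}(t)$ under products, quotients and derivatives, together with the check $R_A(0)=a_0\ne0$ that keeps the logarithmic derivative well defined.
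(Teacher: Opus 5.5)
Your proof is correct and matches the paper's argument: $\beta$ is rational because derivatives of rational functions are rational, and $\alpha$ is written explicitly as $\bigl(P^{(A)\prime}_{m_A}Q^{(A)}_{n_A}-P^{(A)}_{m_A}Q^{(A)\prime}_{n_A}\bigr)/\bigl(P^{(A)}_{m_A}Q^{(A)}_{n_A}\bigr)$. Your additional check that $P^{(A)}_{m_A}(0)=a_0\neq 0$ is a worthwhile point that the paper leaves implicit, since it guarantees that $\alpha$ expands in $F$ as the subsequent recurrence theorem requires.
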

	
	\begin{proof}
		Since the derivative of a rational function is again rational, it follows that
		\begin{equation*}
			\beta(t) = R_B'(t)
		\end{equation*}
		is rational.\\
		Further
		\begin{equation*}
			\alpha(t)
			=
			\frac{R_A'(t)}{R_A(t)}
			=
			\frac{{P^{\prime}}^{(A)}_{m_A}(t)Q^{(A)}_{n_A}(t) - P^{(A)}_{m_A}(t){Q^{\prime}}^{(A)}_{n_A}(t)}{P^{(A)}_{m_A}(t)Q^{(A)}_{n_A}(t)},
		\end{equation*}
		which is a ratio of polynomials and therefore a rational function.
	\end{proof}
	The rationality of $R_A(t)$ and $R_B(t)$ is inherited by the auxiliary series $\alpha(t)$ and $\beta(t)$ appearing in the recurrence relation below.
	
	We now derive a differential recurrence for the Pad\'e--Sheffer polynomials. In the next theorem, recurrence is expressed in terms of the coefficients of the auxiliary series $\alpha(t)$ and $\beta(t)$.
	\begin{thm}
		Let the auxiliary series $\alpha(t)$ and $\beta(t)$ be given by formal series
		\begin{equation}\label{pseq27}
			\alpha(t) = \sum_{k=0}^{\infty} \alpha_k\,\frac{t^k}{k!},
			\qquad
			\beta(t) = \sum_{k=0}^{\infty} \beta_k\,\frac{t^k}{k!}.
		\end{equation}
		Then the Pad\'e--Sheffer polynomials satisfy
		\begin{equation}\label{pseq28}
			s_{n+1}^{[P]}(x)
			=
			\sum_{k=0}^{n}
			\binom{n}{k}\bigl(\alpha_k + x\,\beta_k\bigr)\,s_{n-k}^{[P]}(x),
			\qquad n\ge 0.
		\end{equation}
	\end{thm}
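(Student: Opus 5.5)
We differentiate the generating function \eqref{pseq6} with respect to $t$ and compare coefficients. Write
\begin{equation*}
F(x,t)=R_A(t)\exp\bigl(xR_B(t)\bigr)=\sum_{n=0}^{\infty}s_n^{[P]}(x)\frac{t^n}{n!}.
\end{equation*}
This lies in $\mathbb{K}[x][[t]]$, and $R_A(0)\neq 0$, so we may take its logarithmic derivative in $t$ formally. The factor $R_A$ contributes $\alpha(t)$, while the exponential contributes $x\,R_B'(t)=x\,\beta(t)$. This yields the differential equation
\begin{equation*}
\frac{\partial}{\partial t}F(x,t)=\bigl(\alpha(t)+x\,\beta(t)\bigr)F(x,t).
\end{equation*}
By Lemma~$2$ the series $\alpha$ and $\beta$ are rational functions. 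Since $Q^{(A)}_{n_A}(0)=Q^{(B)}_{n_B}(0)=1$ and $R_A(0)\neq 0$, both are regular at $t=0$ and have formal expansions \eqref{pseq27} in $\mathbb{K}[[t]]$.

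Next we expand both sides in powers of $t$. The left-hand side is
\begin{equation*}
\sum_{n\ge 0}s_{n+1}^{[P]}(x)\frac{t^n}{n!}.
\end{equation*}
The right-hand side is the product of the two exponential series
\begin{equation*}
\sum_{k\ge 0}(\alpha_k+x\beta_k)\frac{t^k}{k!}
\qquad\text{and}\qquad
\sum_{j\ge 0}s_j^{[P]}(x)\frac{t^j}{j!}.
\end{equation*}
By the binomial (Cauchy) product rule for exponential generating functions, the coefficient of $t^n/n!$ in this product is
\begin{equation*}
\sum_{k=0}^{n}\binom{n}{k}(\alpha_k+x\beta_k)\,s_{n-k}^{[P]}(x).
\end{equation*}
Equating the coefficients of $t^n/n!$ on both sides gives \eqref{pseq28} for every $n\ge 0$.

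The only point needing care is justifying the formal manipulations. First, termwise differentiation in $\mathbb{K}[x][[t]]$ is legitimate. Second, the identity $\partial_t\exp\bigl(xR_B(t)\bigr)=xR_B'(t)\exp\bigl(xR_B(t)\bigr)$ holds formally; this follows from the chain rule for composition with a series of zero constant term, using $R_B(0)=0$. Third, $\alpha$ is a genuine power series because $R_A$ is invertible in $F$. Once these are in place, the remaining step is routine coefficient comparison.
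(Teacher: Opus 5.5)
Your proposal is correct and follows the same route as the paper's proof. Both differentiate $F(x,t)=R_A(t)\exp\bigl(xR_B(t)\bigr)$ in $t$, rewrite the result as $\bigl(\alpha(t)+x\beta(t)\bigr)F(x,t)$, and compare coefficients of $t^n/n!$ using the binomial (Cauchy) product of exponential series. Your remarks justifying the formal steps (invertibility of $R_A$, the chain rule using $R_B(0)=0$) make the argument tighter but do not change it.
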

	
	\begin{proof}
		Differentiating both side of generating function \eqref{pseq6} with respect to $t$, we get
		\begin{equation*}
			\sum_{n=1}^{\infty}n s_{n}^{[P]}(x)\,\frac{t^{n-1}}{n!}
			=R_A'(t)\,\exp\bigl(xR_B(t)\bigr)+
			x\,R_A(t)R_B'(t)\,\exp\bigl(xR_B(t)\bigr),
		\end{equation*}
		which is further rewritten as:
		\begin{equation*}
			\sum_{n=0}^{\infty} s_{n+1}^{[P]}(x)\,\frac{t^{n}}{n!}
			=
			\left(\frac{R_A'(t)}{R_A(t)} + x\,R_B'(t)\right)R_A(t)\,\exp\bigl(xR_B(t)\bigr).
		\end{equation*}
		Using  equation \eqref{pseq29} and formal series representation \eqref{pseq27}, it follows that
		\begin{equation*}
			\sum_{n=0}^{\infty} s_{n+1}^{[P]}(x)\,\frac{t^{n}}{n!}
			=
			\sum_{k=0}^{\infty} (\alpha_k + x\,\beta_k)\,\frac{t^k}{k!}R_A(t)\,\exp\bigl(xR_B(t)\bigr).
		\end{equation*}
		Again, in view of generating function \eqref{pseq6}, we have
		\begin{equation*}
			\sum_{n=0}^{\infty} s_{n+1}^{[P]}(x)\,\frac{t^{n}}{n!}=\sum_{k=0}^{\infty} (\alpha_k + x\,\beta_k)\,\frac{t^k}{k!}\sum_{n=0}^{\infty} s_{n}^{[P]}(x)\,\frac{t^{n}}{n!}.		
		\end{equation*}
		Applying the Cauchy product formula in the right hand side of above equation, we obtain
		\begin{equation*}
			\bigl(\alpha(t) + x\,\beta(t)\bigr) F(x,t)
			=
			\sum_{n=0}^{\infty}
			\left(
			\sum_{k=0}^{n}
			\binom{n}{k}\bigl(\alpha_k + x\,\beta_k\bigr)\,s_{n-k}^{[P]}(x)
			\right)
			\frac{t^n}{n!}.
		\end{equation*}
		Comparing coefficients of $t^n/n!$ on both sides yields the recurrence relation \eqref{pseq28}.
	\end{proof}
	\begin{rem}
		The coefficient sequences $\{\alpha_k\}_{k\ge 0}$ and $\{\beta_k\}_{k\ge 0}$ satisfy linear recurrence relations with constant coefficients. In particular, the order of each recurrence is bounded by the degree of the denominator of the corresponding rational function.
		
		Consequently, the recurrence relation \eqref{pseq28} is implemented using only finitely many previously computed coefficients, providing an efficient recursive construction of the Pad\'e--Sheffer polynomials.
	\end{rem}
	The rational structure of the delta symbol $R_B(t)$ also leads to a particularly transparent form of the Lagrange inversion formula. This representation will later provide a natural bridge to a combinatorial interpretation of the Pad\'e--Sheffer construction.
	
	Now, from equation \eqref{pseq7}, we have
	\begin{equation*}
		P^{(B)}_{m_B}(0) = 0,
		\qquad
		\bigl(P^{(B)}_{m_B}\bigr)'(0) \neq 0.
	\end{equation*}
	Since $P^{(B)}_{m_B}(t)$ has a simple zero at $t=0$, it is factored as:
	\begin{equation*}
		P^{(B)}_{m_B}(t) = t\,\widetilde P^{(B)}_{m_B}(t),
		\qquad
		\widetilde P^{(B)}_{m_B}(0) \neq 0.
	\end{equation*}
	The Pad\'e rational approximants $R_B(t)$ is further rewritten as:
	\begin{equation}\label{pseq30}
		R_B(t)
		=
		\frac{t\,\widetilde P^{(B)}_{m_B}(t)}{Q^{(B)}_{n_B}(t)}.
	\end{equation}
	We then define the rational kernel
	\begin{equation}\label{pseq31}
		H_B(t)
		=
		\frac{Q^{(B)}_{n_B}(t)}{\widetilde P^{(B)}_{m_B}(t)}.
	\end{equation}
	The series $H_B(t)$ is rational and invertible in $F=\mathbb{K}[[t]]$ because $\widetilde P^{(B)}_{m_B}(0)\neq 0$ and $Q^{(B)}_{n_B}(0)=1$.
	
	\begin{thm}
		For integers $n\ge k\ge 1$, the rational kernel $H_B(t)$ satisfy
		\begin{equation}\label{pseq32}
			\bigl\langle L_{[P]}^{\,k} | x^n\bigr\rangle
			=
			k\,(n-1)!\,
			[t^{\,n-k}]\bigl(H_B(t)\bigr)^n.
		\end{equation}
		Moreover,
		\begin{equation}\label{pseq33}
			\bigl\langle L_{[P]}^{\,k} | x^n\bigr\rangle = 0,
			\qquad n<k.
		\end{equation}
	\end{thm}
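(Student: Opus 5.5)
The plan is to reduce the pairing to a coefficient extraction and then apply the Lagrange inversion formula to the rational delta series $R_B(t)$.

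\textbf{Step 1: coefficient extraction.} In the umbral algebra, a functional with indicator $h(t)=\sum_j h_j t^j/j!$ satisfies $\langle h(t)\,|\,x^n\rangle = h_n = n!\,[t^n]h(t)$. The indicator of $L_{[P]}^{\,k}$ is $g(t)^k=\bigl(R_B^{-1}(t)\bigr)^k$. Hence
\begin{equation*}
\bigl\langle L_{[P]}^{\,k}\,|\,x^n\bigr\rangle = n!\,[t^n]\bigl(R_B^{-1}(t)\bigr)^k .
\end{equation*}

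\textbf{Step 2: the case $n<k$.} Since $R_B^{-1}(t)$ is a delta series, it has order exactly $1$. So $\bigl(R_B^{-1}(t)\bigr)^k$ has order $k$, and all coefficients of $t^n$ with $n<k$ vanish. This gives \eqref{pseq33} at once.

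\textbf{Step 3: recasting $R_B^{-1}$ as a fixed point.} Use the factorization \eqref{pseq30} and the kernel \eqref{pseq31}, which give $R_B(t)=t/H_B(t)$. Put $w=R_B^{-1}(t)$. Then $t=R_B(w)=w/H_B(w)$, that is,
\begin{equation*}
w = t\,H_B(w),
\end{equation*}
with $H_B(0)=1/\widetilde P^{(B)}_{m_B}(0)\neq 0$. This is exactly the hypothesis of the Lagrange inversion theorem in its standard form: if $w=t\,\phi(w)$ with $\phi(0)\neq0$, then for $n\ge k\ge1$,
\begin{equation*}
[t^n]\,w^k=\frac{k}{n}\,[u^{n-k}]\,\phi(u)^n .
\end{equation*}

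\textbf{Step 4: combining.} Taking $\phi=H_B$ and combining with Step 1 gives
\begin{equation*}
\bigl\langle L_{[P]}^{\,k}\,|\,x^n\bigr\rangle=n!\cdot\frac{k}{n}\,[t^{n-k}]H_B(t)^n=k\,(n-1)!\,[t^{n-k}]H_B(t)^n,
\end{equation*}
which is \eqref{pseq32}.

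\textbf{Where care is needed.} The only delicate point is checking that the factorization is legitimate. Because $R_B(0)=0$ and $Q^{(B)}_{n_B}(0)=1$, we have $P^{(B)}_{m_B}(0)=0$. Because $R_B'(0)=b_1\neq0$ (Pad\'e matching at order $\ge1$), we have $\widetilde P^{(B)}_{m_B}(0)=b_1\neq0$. Together these make $H_B$ a unit in $F$, as the Lagrange theorem requires.

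The Lagrange inversion formula itself can be quoted as a standard result. If a self-contained argument is preferred, it follows from the formal residue computation $[t^n]w^k=\frac1n[t^{-1}]\,k\,w^{k-1}w'\,t^{-n}$ after the change of variable $t=u/H_B(u)$.

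Finally, since $H_B$ is a ratio of polynomials, $[t^{n-k}]H_B^n$ is a finite multinomial-type sum. This is the remark the paper is aiming at, but it is not needed for the proof.
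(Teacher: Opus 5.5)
Your proof is correct and follows the paper's argument: you reduce the pairing to $n!\,[t^n]\bigl(R_B^{-1}(t)\bigr)^k$, apply Lagrange--B\"urmann inversion using $t/R_B(t)=H_B(t)$, and dispose of the case $n<k$ by the valuation of $\bigl(R_B^{-1}(t)\bigr)^k$. The only difference is cosmetic: you state the inversion in the fixed-point form $w=t\,H_B(w)$ rather than in terms of $\bigl(t/R_B(t)\bigr)^n$, which is the same formula.
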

	
	\begin{proof}
		Since the indicator of the delta functional $L_{[P]}$ is $R_B^{-1}(t)$, the umbral pairing gives
		\begin{equation}\label{pseq35}
			\bigl\langle L_{[P]}^{\,k} | x^n\bigr\rangle
			=
			n!\,[t^{n}]\bigl(R_B^{-1}(t)\bigr)^k.
		\end{equation}
		Also, the series $R_B(t)$ admits a unique compositional inverse $R_B^{-1}(t)$ because $R_B(0)=0$ and $R_B'(0)\neq 0$.\\
		Applying the Lagrange--B\"urmann inversion formula \cite{Roman1984}, we obtain
		\begin{equation*}
			[t^{n}]\bigl(R_B^{-1}(t)\bigr)^k
			=
			\frac{k}{n}\,[t^{\,n-k}]
			\left(\frac{t}{R_B(t)}\right)^{\!n}.
		\end{equation*}
		In view of equation \eqref{pseq30}, it follows that
		\begin{equation*}
			\frac{t}{R_B(t)}
			=
			\frac{t\,Q^{(B)}_{n_B}(t)}{t\,\widetilde P^{(B)}_{m_B}(t)}
			=
			\frac{Q^{(B)}_{n_B}(t)}{\widetilde P^{(B)}_{m_B}(t)}
			=
			H_B(t).
		\end{equation*}
		Substituting above expression into the equation \eqref{pseq35}, we get
		\begin{equation*}
			[t^{n}]\bigl(R_B^{-1}(t)\bigr)^k
			=
			\frac{k}{n}\,[t^{\,n-k}]\bigl(H_B(t)\bigr)^n.
		\end{equation*}
		Multiplying by $n!$ yields the result in equation \eqref{pseq32}.
		
		If $n<k$, then the series $\bigl(R_B^{-1}(t)\bigr)^k$ has valuation at least $k$, so its coefficient of $t^{n}$ vanishes. Hence
		\begin{equation*}
			[t^{n}]\bigl(R_B^{-1}(t)\bigr)^k = 0,
			\qquad n<k,
		\end{equation*}
		which proves equation \eqref{pseq33}.
	\end{proof}
	\begin{cor}
		The evaluation formula \eqref{pseq32} reduces to the extraction of coefficients from powers of rational functions. More precisely,
		\begin{equation*}
			\bigl(H_B(t)\bigr)^n
			=
			\bigl(Q^{(B)}_{n_B}(t)\bigr)^{n}\,
			\bigl(\widetilde P^{(B)}_{m_B}(t)\bigr)^{-n},
		\end{equation*}
		where $\bigl(Q^{(B)}_{n_B}(t)\bigr)^{n}$ is a polynomial and $\bigl(\widetilde P^{(B)}_{m_B}(t)\bigr)^{-n}$ admits a convergent formal power series expansion in $F=\mathbb{K}[[t]]$. Consequently,
		\begin{equation*}
			[t^{\,n-k}]\bigl(H_B(t)\bigr)^n
		\end{equation*}
		is computed entirely through multinomial and generalized binomial expansions.
	\end{cor}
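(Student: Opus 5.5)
The corollary follows directly from Theorem~5 and the factorization \eqref{pseq31}; no further umbral input is needed. I will proceed in three steps.

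\textbf{Step 1 (the power identity).} Raising \eqref{pseq31} to the $n$-th power in $F$ gives $(H_B(t))^n = (Q^{(B)}_{n_B}(t))^n(\widetilde P^{(B)}_{m_B}(t))^{-n}$. This is legitimate because $\widetilde P^{(B)}_{m_B}(0)\neq 0$, so $\widetilde P^{(B)}_{m_B}$ is a unit of $F$ and negative powers make sense. The first factor is a polynomial of degree at most $n\,n_B$, being a power of a polynomial.

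\textbf{Step 2 (the two expansions).} For the second factor, write $\widetilde P^{(B)}_{m_B}(t)=p_0\bigl(1+u(t)\bigr)$ with $p_0=\widetilde P^{(B)}_{m_B}(0)\neq 0$. Here $u(t)=\sum_{j=1}^{m_B-1}(p_j/p_0)t^j$ is a polynomial with $u(0)=0$; the index stops at $m_B-1$ since $\deg\widetilde P^{(B)}_{m_B}=m_B-1$. Then I expand
\begin{equation*}
(1+u)^{-n}=\sum_{r\ge 0}\binom{-n}{r}u^r .
\end{equation*}
This sum is well defined in $F$ because $u^r$ has valuation at least $r$, so only finitely many terms contribute to each coefficient. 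This is the precise sense in which the expansion ``converges'' formally, and I will state it that way. Each $u^r$ is in turn expanded by the multinomial theorem over its finitely many monomials $t^j$. Likewise $(Q^{(B)}_{n_B})^n$ is expanded by the multinomial theorem.

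\textbf{Step 3 (coefficient extraction).} Apply the Cauchy product:
\begin{equation*}
[t^{n-k}](H_B(t))^n=\sum_{i=0}^{n-k}[t^i](Q^{(B)}_{n_B}(t))^n\,[t^{n-k-i}](\widetilde P^{(B)}_{m_B}(t))^{-n}.
\end{equation*}
Only $r\le n-k-i$ contributes to the second factor, so the whole quantity is a finite sum of generalized binomial coefficients times multinomial coefficients times monomials in the Pad\'e coefficients. Inserting this into \eqref{pseq32} finishes the argument.

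The only point needing care is the formal summability justification in Step~2, and the valuation bound settles it. Everything else is bookkeeping.
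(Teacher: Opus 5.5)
Your proposal is correct and follows the same route as the paper: raise \eqref{pseq31} to the $n$-th power, expand $(Q^{(B)}_{n_B})^n$ by the multinomial theorem and $(\widetilde P^{(B)}_{m_B})^{-n}$ by the generalized binomial theorem, then extract $[t^{n-k}]$ via the Cauchy product; your normalization $\widetilde P^{(B)}_{m_B}=p_0(1+u)$ and the valuation bound on $u^r$ supply the formal-summability justification and the truncation $r\le n-k-i$ that the paper leaves implicit. Two small bookkeeping points: the evaluation formula \eqref{pseq32} is Theorem~6 in the paper's numbering (Theorem~5 is the differential recurrence), and you should carry the factor $p_0^{-n}$ explicitly in Step~2.
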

	
	\begin{proof}
		The identity
		\begin{equation*}
			\bigl(H_B(t)\bigr)^n
			=
			\bigl(Q^{(B)}_{n_B}(t)\bigr)^{n}\,
			\bigl(\widetilde P^{(B)}_{m_B}(t)\bigr)^{-n}
		\end{equation*}
		follows directly from the definition \eqref{pseq31}. Since $Q^{(B)}_{n_B}(t)$ is a polynomial, the coefficients of $\bigl(Q^{(B)}_{n_B}(t)\bigr)^{n}$ are obtained by multinomial expansion. Moreover, the series $\bigl(\widetilde P^{(B)}_{m_B}(t)\bigr)^{-n}$ is well defined in $F$ because $\widetilde P^{(B)}_{m_B}(0)\neq 0$ and is expanded using the generalized binomial theorem. The desired coefficient $[t^{\,n-k}]\bigl(H_B(t)\bigr)^n$ is then obtained by multiplying these two series and extracting the coefficient of $t^{\,n-k}$.
	\end{proof}
	We now illustrate the general Pad\'e--Sheffer construction through several classical Sheffer polynomial families. These examples show how the rational approximation of the generating series leads to explicit Pad\'e--Sheffer polynomials and clarifies the effect of rationalization on the corresponding operational structure.
	\section{Conclusion}
	In this article, we introduced a dual Pad\'e approximation framework for Sheffer sequences by simultaneously approximating both the invertible generating function and the delta-series through rational Pad\'e approximants. Working entirely within the algebraic setting of the classical umbral calculus, we proved that the resulting Pad\'e--Sheffer polynomials form a Sheffer sequence associated with uniquely determined invertible and delta functionals. Explicit generating functions, lowering operators, differential recurrences, connection constants, and exactness results are established, demonstrating that the proposed construction preserves the essential algebraic structure of the original Sheffer family.\\

	Further, The rational functions $R_A(t)$ and $R_B(t)$ admit finite-dimensional linear realizations. In particular, they can be represented by finite-state linear systems whose state-space dynamics generate the coefficients of their formal power series expansions. As a consequence, the recurrence coefficients appearing in Theorem~5 (namely the sequences $\{\alpha_k\}_{k\ge 0}$ and $\{\beta_k\}_{k\ge 0}$) can be produced by such finite-state linear systems. This observation suggests natural links between Pad\'e--Sheffer sequences, rational formal power series, and automata-theoretic models, in which rational series arise as behaviors of weighted finite automata \cite{Eil, droste2007}. A detailed investigation of these connections, including explicit realizations and combinatorial interpretations, is left for future work.\\
	
	Now, consider the standard linear test equation
	\begin{equation*}
		y'(t) = -a y(t), \qquad y(0) = 1,
	\end{equation*}
	with a stiff parameter $a>0$. It is well known in numerical analysis that the stability function of the $2$-stage, $4$-th order Gauss--Legendre Runge--Kutta method coincides with the $[2|2]$ Pad\'e approximant of the exponential function \cite{But}. Therefore, for a single time step of size $h$, the numerical solution takes the exact form
	\begin{equation*}
		y_1 = R_A^{[2|2]}(-ah),
	\end{equation*}
	where $R_A^{[2|2]}$ denotes the Pad\'e approximant expressed in \eqref{pseq65}.
	
	Since the numerical method is of order $p=4$, the Maclaurin expansion of the stability function about $h=0$ must precisely agree with that of the exact propagator $e^{-ah}$ up to and including the term of order $h^4$. Indeed, we check directly that for $m\in\{0,1,2,3,4\}$, the coefficients of $R_A^{[2|2]}(-ah)$ satisfy
	\begin{equation*}
		a_m = \frac{(-a)^m}{m!},
	\end{equation*}
	in full agreement with the standard Taylor coefficients of the exponential function.
	
	The real theoretical utility of the explicit closed form for $a_m$ emerges when analyzing the local truncation error
	\begin{equation*}
		E_{\text{loc}}(h) = e^{-ah} - R_A^{[2|2]}(-ah),
	\end{equation*}
	which is determined entirely by the un-matched higher-order terms where $m\ge 5$. Substituting the explicit formula \eqref{pseq72} for $a_m$ into the tail of the series, we obtain the exact asymptotic expansion
	\begin{align*}
		E_{\text{loc}}(h)
		&= \sum_{m=5}^{\infty} \left( \frac{(-a)^m}{m!} - a_m \right) h^m \nonumber\\
		&= \sum_{m=5}^{\infty} \left[ \frac{(-1)^m}{m!}
		+ 4\sqrt{3}\,\Bigl(\frac{1}{2\sqrt{3}}\Bigr)^{\!m}
		\sin\!\Bigl(m\frac{5\pi}{6}\Bigr) \right] a^m h^m.
	\end{align*}
	
	The above representation explicitly demonstrates that, in stiff regimes, the higher-order parasitic truncation errors do not decay monotonically. Instead, they exhibit geometrically decaying magnitudes, governed by a decay ratio of $1/(2\sqrt{3})$, which are continuously modulated by a discrete oscillatory phase factor of period $12$ in the index $m$.\\
	
	The presence of this oscillatory behaviour can be mathematically traced back to the complex conjugate poles of the rational Pad\'e amplitude. A detailed analysis of how such locally oscillatory truncation errors propagate into global error bounds for stiff and differential-algebraic systems, and how they dynamically interact with stability constraints over long integrations, remains a open problem  (for further studies, see, \cite{Hairer2010, Asante2025}). \\
	
	Future research may also include the theoretical investigation of higher-order Pad\'e approximations of multivariate and degenerate Sheffer sequences, asymptotic properties of Pad\'e--Sheffer polynomials, and numerical algorithms based on their rational operational structure.\\
	
	\textbf{Declarations}\\
	
	\textbf{Conflict of Interest:} The authors declare that they have no conflict of interest.\\
	\textbf{Funding:} This work has been done under Senior Research Fellowship (Ref No. 231610072319 dated:29/07/2023) awarded to the third author by University Grants Commission, New Delhi.\\
	\textbf{	Data availability:} Not applicable.


\begin{thebibliography}{99}
		\bibitem{And}G. E.	Andrews, R. Askey and R. Roy, \textit{Special Functions}, Cambridge University Press, Cambridge, 1999.
		\bibitem{Asante2025}
		E.~O.~Asante-Asamani, A.~Kleefeld, and B.~A.~Wade,
		\textit{A fourth-order exponential time differencing scheme with directional splitting based on Pad\'e (2,2) approximation},
		J. Comput. Appl. Math. \textbf{455} 116120 (2025).
		\bibitem{Baker1996}
		G.~A.~Baker Jr. and P.~Graves-Morris,
		\emph{Pad\'e Approximants},
		2nd ed., Encyclopedia of Mathematics and Its Applications \textbf{59} Cambridge University Press, 1996.
		
		\bibitem{But} J. C. Butcher, \textit{Numerical Methods for Ordinary Differential Equations}, 2nd Edition, John Wiley \& Sons, Ltd., Chichester, 2008.
		
		\bibitem{comtet1974}
		L.~Comtet,
		\emph{Advanced Combinatorics: The Art of Finite and Infinite Expansions},
		D. Reidel Publishing Company, Dordrecht 1974.
		
		\bibitem{dattoli2000}
		G.~Dattoli, 
		\textit{Generalized polynomials, operational identities and their applications}, J. Comput. Appl. Math. \textbf{118} (1-2) 111--123 (2000).
		
		\bibitem{Dattoli2009}
		G.~Dattoli, B.~Germano, M.~R.~Martinelli, and P.~E.~Ricci,
		\textit{Monomiality and partial differential equations},
		Math. Comput. Model. \textbf{50} (9-10) 1332--1337 (2009).
		
		\bibitem{Dattoli2025}
		G.~Dattoli, S.~Khan, and U.~Ahmad, 
		\textit{A novel advancement in the study of Appell polynomials via Pad\'e rational approximants}, Numer Algor (2026).
		\bibitem{droste2007}
		M.~Droste and P.~Gastin,
		\textit{Weighted automata and weighted logics},
		Theoret. Comput. Sci. \textbf{380} (1-2) 69--86 (2007).
		\bibitem{Eil} S. Eilenberg, \textit{Automata, Languages, and Machines} (Vol. A) Academic Press, 1974.
		\bibitem{PadeBackParabolic}
		L.~Eld{\'e}n,
		\textit{Time discretization in the backward solution of parabolic equations. I},
		\emph{Math. Comp.} \textbf{39} (159) 53--68 (1982).
		\bibitem{Hairer2010}
		E.~Hairer and G.~Wanner,
		\emph{Solving Ordinary Differential Equations II: Stiff and Differential-Algebraic Problems},
		2nd ed., Springer-Verlag, Berlin, 2010.
		
		\bibitem{Luke1975}
		Y.~L.~Luke, 
		\emph{Mathematical Functions and Their Approximations}, 
		Academic Press, New York, 1975.
		
		\bibitem{NassifPade}
		N.~R.~Nassif, \textit{Pad\'e--Galerkin methods for parabolic partial differential equations},
		\emph{Publ. Math. Inform. Rennes} \textbf{S3} 1--27 (1975).
		\bibitem{Niven}I. Niven, \textit{Formal power series}, The American Mathematical Monthly, \textbf{76}(8) 871--889 (1969).
		\bibitem{Rainville}
		E.~D.~Rainville, 
		\emph{Special Functions}, 
		Macmillan Company, 1960.
		
		\bibitem{Roman1984}
		S.~Roman, 
		\emph{The Umbral Calculus}, 
		Pure and Applied Mathematics\textbf{ 111} Academic Press, New York, 1984.
		
		\bibitem{Rota1973}
		G.-C. Rota, D. Kahaner, and A. Odlyzko, \textit{On the foundations of combinatorial theory. VIII. Finite operator calculus}, J. Math. Anal. Appl. \textbf{42}(3) 684–760 (1973).
		
		\bibitem{Sheffer1939}
		I.~M.~Sheffer,
		\textit{Some properties of polynomial sets of type zero},
		Duke Math. J. \textbf{ 5}(3) 590--622 (1939).
		\bibitem{PadeStability}
		R.~Shorten, M.~Corless, S.~Sajja, and S.~Solmaz,
		\textit{On Pad\'e approximations, quadratic stability and discretization of switched linear systems},
		Syst. Control Lett. \textbf{60} (9) 683--689 (2011).
	\end{thebibliography}
\end{document}